\newtheorem{thm}{Theorem}
\newtheorem{lem}[thm]{Lemma}
\newtheorem{prop}[thm]{Proposition}
\newtheorem{problem}{Problem}
\newtheorem{remark}[thm]{Remark}
\newcommand{\N}{\mathcal{N}}
\newcommand{\R}{\mathbb{R}}
\newcommand{\T}{^{\top}}
\newcommand{\tr}{\mathrm{tr}}
\newcommand{\rev}[1]{ #1 }
\newcommand{\blue}{\color{blue}}
\newif\iflongversion
\title{\LARGE \bf 
Actuator Scheduling for Linear Systems: A Convex Relaxation  Approach 

}
\author{Junjie Jiao, Dipankar Maity, John S. Baras, and Sandra Hirche
\thanks{This work was  supported in part by the European Union’s Horizon 2020 research and innovation programme under the Marie Sk\l{}odowska-Curie grant agreement no. 899987, in part by  the German
Research Foundation within the Priority Program SPP 1914 - Cyber-Physical Networking, and in part by a   DFG Mercator Fellowship.
}
\thanks{J. Jiao and S. Hirche are  with the Chair of Information-oriented Control,  Department of 	Electrical and Computer Engineering,  Technical University of Munich, Munich, Germany.
Email: 
{\tt junjie.jiao@tum.de; 
	hirche@tum.de}
	}%
\thanks{D. Maity is with the Department of Electrical and Computer Engineering, University of North Carolina at Charlotte, NC, 28223, USA. 
Email: 
{\tt dmaity@uncc.edu}}%
\thanks{J. S. Baras is with the Department of Electrical and Computer Engineering, University of Maryland at College Park, MD, 20742, USA. 
Email: 
{\tt baras@umd.edu}}%
}
\begin{document}


\maketitle
\thispagestyle{empty}
\pagestyle{empty}



\begin{abstract}

In this letter,	we investigate the problem of actuator scheduling for networked control systems. 
Given a stochastic linear system with a number of  actuators, we consider the case that    one actuator is activated at each time.
This problem is combinatorial in nature and NP hard to solve.
We propose a convex relaxation to the actuator scheduling problem, and use its solution as a {\em{reference}} to design an algorithm for solving the original scheduling problem. 
Using dynamic programming arguments, we provide a suboptimality bound of our proposed  algorithm.
%
Furthermore, we show that our framework can be extended to incorporate multiple actuator scheduling at each time and  actuation costs.
%
A simulation example is provided, which shows that our proposed method outperforms a random selection approach and a greedy selection approach.
%
%
%
\end{abstract}



\begin{keywords}
Actuator scheduling, LQG control,   optimization, Riccati equation.
\end{keywords}

	\section{Introduction}\label{sec_intro}

In recent years, networked control systems (NCSs) have gained much interest in the controls community due to the advancements in communication architecture, computer technology, and network infrastructure  that enable efficient distributed sensing, estimation, and control \cite{survey2006proc_NCS,walsh2001scheduling,baras1988sensor}.
%
Potential applications  include smart buildings~\cite{2010smart_building},
industrial and environmental control~\cite{2010industrial_automation}.
%
%
%
Due to potential constraints on the communication and computation resources of NCSs, sensor scheduling 
and actuator scheduling 
are two important and challenging problems, and efficient algorithms are sought for solving them.

The majority of the existing work focuses on sensor scheduling problems and their  variants. 
Several approaches (e.g., stochastic selection \cite{Gupta2006auto}, search tree pruning \cite{Vitus2012auto}, greedy selection \cite{Tzoumas2021tac}, semidefinite programming based trajectory tracking \cite{MAITY2022110078}) have been proposed to solve such problems.
In contrast, the actuator scheduling problem has received much lesser attention. 
While sensor scheduling problems focus on minimizing (a function of) the estimation error,  the actuator scheduling directly affects the controllability and stability of the system as well as the control performance.
Therefore, a significant portion of the work on actuator scheduling focuses on studying the effects of actuator scheduling on the controllability and stability of the systems, e.g., \cite{pasqualetti2014controllability, summers2014optimal, cortesi2014submodularity, Siami2021tac, maheshwari2021stabilization} and others.
It is shown in \cite{pasqualetti2014controllability,summers2014optimal,cortesi2014submodularity} that several classes of energy related metrics associated with the controllability Gramian 
have a structural property (\textit{modularity}) that
allows for an approximation guarantee by using a simple greedy heuristic.
These problems are further investigated in \cite{Siami2021tac}, where a framework of sparse actuator schedule design was developed that guarantees performance bounds for a class of  controllability metrics.
Except \cite{maheshwari2021stabilization}, these works assume a time invariant scheduling problem, which is likely to be suboptimal and may impose restrictions on controllability for large systems.
In \cite{maheshwari2021stabilization} the authors use a round robin scheme for selecting the actuators and show that local stability is attained if the switching between the actuators is fast enough. 
The efficacy of a time-varying scheduling over a time-invariant one for interconnected systems {\blue is} also demonstrated in \cite{nozari2017time}. 
However, how to find the optimal time-varying schedule still remains unanswered.

%
The efficacy of the abovementioned controllers on a system with different performance criteria (e.g., a quadratic cost function) is unknown and likely to be suboptimal since these works solely focus on the controllability and stability aspect of the system.
In contrast to those works, a few existing works \cite{shiling2013tac, 2019tii_scheduling, bommannavar2008optimal} consider a linear-quadratic optimal control problem for actuator scheduling.
However, the focus on these works is to decide whether to activate the \textit{one single} actuator available at each time or not. 
%


Motivated by the above, in this letter  we study the  actuator scheduling problem for a finite horizon linear-quadratic control system with a number of actuators. 
We consider the case that  a nonempty subset of the actuators is active at each time.
The performance of the  actuator schedule is measured by a finite horizon quadratic cost function of the system state and control plus the cost of using each actuator (representing e.g., energy consumption).
This problem is combinatorial in nature and is NP-hard in general. 
Due to space limitations, we first restrict ourselves to the case that only one actuator is activated at each time and that all actuators have equal actuation costs.
We then provide discussions and simulation results on the general cases that multiple actuators are activated at each time and that the  actuators have different actuation costs.
%

%
%

The main contributions of this letter are the following: 
(i) We propose a convex relaxation to the actuator scheduling problem, and use its solution as a `reference' to design an algorithm for solving the original NP-hard scheduling problem. 
(ii) 
We  provide a suboptimality bound for the proposed algorithm.
(iii) We further show that our results can be extended to the cases with multiple actuator scheduling and actuation costs.




The outline of this letter is  as follows: 
In Section \ref{section_prob}, we formulate the  actuator scheduling problem, which is  solved in Section \ref{section_solution}. 
%
	In Section~\ref{sec_discussion}, we   provide discussions on  multiple actuator scheduling and actuation costs.
%
Simulation results are provided in Section~\ref{sec_simu}. 
Section~\ref{sec_conclusion} concludes this letter.


{\em \textbf{Notation:}}
We denote the set of real numbers and positive real numbers  by $\mathbb{R}$ and $\mathbb{R}_+$, respectively.
The  set of $n$ dimensional vectors over $\mathbb{R}$ is denoted by $\mathbb{R}^n$ and the set of real $n \times m$ matrices is denoted by $\mathbb{R}^{n\times m}$.
%
The identity matrix is denoted by $I$.
For a given matrix $A$, its transpose and inverse (if exists) are denoted by $A^\top$ and $A^{-1}$, respectively.
For a symmetric matrix $P$, we denote $P \succ 0$ ($P \succeq 0$) if it is positive definite (positive semidefinite). 
The trace of a square matrix $A$ is denoted by ${\rm tr} (A)$.
The Frobenious norm of a matrix $A$ is denoted by $\| A\|_F$.
We use $\mathbb{E} [x]$ to denote the expectation of a random variable $x$.

\section{Problem Formulation}\label{section_prob}


We consider a system with $N$ actuators of the form 
\begin{align}\label{system11}
	x_{t+1}=A_t x_t + \textstyle \sum_{j=1}^N \gamma_t(j) B_t(j) u_t(j) + w_t,   
\end{align}
where $A_t \in \mathbb{R}^{n \times n}$, $B_t(j)  \in \mathbb{R}^{n \times m_j}$,  $x_t\in \R^n$   the state,   $u_t(j) \in \R^{m_j}$ the input from the $j$-th actuator, and $w_t\in \R^n$   an independent sequence of Gaussian random variables with $w_t \sim \mathcal{N}(0,W_t)$. 
 The initial state is  $x_0 \sim \mathcal{N}(0,W_{-1})$ and it is independent of $w_t$ for all $t$. 
The matrix $B_t(j)$ describes how the control input, $u_t(j)$, of the  $j$-th  actuator enters the system at the time $t$. The parameters $\gamma_t(j)$ are binary-valued such that $\gamma_t(j) =1$ if the $j$-th actuator is activated at the time $t$ and $\gamma_t(j) =0$ otherwise.

We consider the  actuator scheduling problem that at each time only $N_t (1 \leq N_t \leq  N)$ out of the  $N$ actuators   are used  to control the system \eqref{system11} at time $t$.
In this case, $\gamma_t(j) =1$ for $N_t$ out of all $N$ actuators  at the time $t$. 

Consider  a standard  finite horizon quadratic cost function  
\begin{align} \label{eq_LQG_cost11}
	J_1 =	\mathbb{E} \Big[ &\textstyle \sum_{t=0}^{T-1} \Big(x_t^\top Q_t x_t + \sum_{j=1}^N \gamma_t(j) {u_t(j)}\!^\top R_t(j) u_t(j) \Big) \nonumber \\
	&+ x_T^\top Q_T x_T \Big],
\end{align} 
where  $Q_t, R_t(j) \succ 0$  for all $t$. In addition, consider also the actuation cost  function
$
        J_2 = \sum_{t=0}^{T-1} \sum_{j=1}^N \gamma_t(j)  c_t(j) ,
$
where   $c_t(j) \in \mathbb{R}_+$ is the cost of using actuator $j$ at time $t$. Note that $c_t(i)$ and  $c_t(j)$ are in general different for $i \neq j$ and $i,j \in \mathsf{N}\triangleq\{1,2,\ldots, N\}$, which can be  due to the fact that different  actuators may have different energy consumption or resource usage.
The objective of  the  actuator scheduling problem  is then to find   an actuator schedule   that minimizes the joint control-actuation cost  $J= J_1 +J_2$.
 
Due to space limitations, in the sequel we will restrict ourselves to the case that $N_t =1$ and $ c_t(j) =c_t$ for all~$j$. 
In other words, we consider the case that exactly one out of the $N$ actuators is used at each time  and that each actuator has the same  actuation cost.  
The assumption $c_t(j) =c_t$ leads to $J_2$ being independent of the actuator schedule and therefore, minimizing $J$ is equivalent to minimizing $J_1$.
The discussions on the cases with multiple actuators and actuation costs will be provided afterwards in Section \ref{sec_discussion}.

Since only one actuator is used at each time and the actuation cost is independent of the chosen actuator, system~\eqref{system11} can now be rewritten as 
\begin{align}\label{system}
	x_{t+1}=A_t x_t+B_t(j) u_t(j) + w_t, \  j\in \mathsf{N} ,
\end{align}
and the associated cost function becomes
\begin{equation} \label{eq_LQG_cost2}
\textstyle	J =	\mathbb{E} \Big[ x_T^\top Q_T x_T + \sum_{t=0}^{T-1} \Big(x_t^\top Q_t x_t + {u_t(j)}^\top R_t(j) u_t(j) \Big) \Big].
\end{equation} 
Let $\sigma : [0,T-1]\to \mathsf{N}$ be an actuator schedule function such that $\sigma(t)=j$ denotes that the $j$-th actuator is used at time~$t$ to control the system \eqref{system}.
The objective is to find an actuator schedule $\sigma$ that minimizes the   cost function \eqref{eq_LQG_cost2}.
We assume that perfect state measurement is available.
The information available at the controller at time $t$ is denoted by $\mathcal{I}_t$, with $\mathcal{I}_t = \mathcal{I}_{t-1}  \bigcup \{ x_t \}$ for all $t\geq 1$ and $\mathcal{I}_0 = \{ x_0 \}$.
Therefore, the (optimal) controller associated with the $j$-th actuator is 
\begin{align} \label{control}
	u_t(j)=L_t(j)  \mathbb{E} [x_t \mid \mathcal{I}_t] = L_t(j) x_t,\quad j\in \mathsf{N},
\end{align}
where  the   gain matrix $L_t(j)$ is given by
\begin{equation}\label{control_gain}
	L_t(j) = -S_t(j)^{-1} {B_t(j)}^\top K_{t+1}(\sigma) A_t,
\end{equation}
and the matrices $K_t(\sigma),  S_t(j) \succ 0$ are given recursively by
\begin{align}
    & S_t(j) := {B_t(j)}^\top K_{t+1} (\sigma) B_t(j) + R_t(j) \label{eq:St}\\
    & K_t (\sigma) = Q_t +  A_t^\top  K_{t+1} (\sigma) A_t - \nonumber\\
    &~~~A_t^\top K_{t+1} (\sigma) B_t (\sigma(t)) S_t(\sigma(t))^{-1} {B_t (\sigma(t))}^\top\!K_{t+1}(\sigma) A_t\nonumber\\
    &  K_T(\sigma) : = K_T  = Q_T.  \label{ARE_i}
\end{align}
Note that, until now, we have not yet not fixed the schedule $\sigma(t)$ at time $t$, and $j$   denotes the $j$-th actuator.
Notice that, for any $t \le T-1$, the matrix $K_t(\sigma)$ depends on the actuator schedule for the interval $[t, T-1]$.
Therefore, the matrix $S_t(j)$ defined in \eqref{eq:St} depends on the actuator schedule for the interval $[t+1, T-1]$, since it depends on $K_{t+1}(\sigma)$. 
Thus, the optimal gain $L_t(j)$ associated with the $j$-th actuator depends on the \textit{future} schedule for the time interval $[t+1,T-1]$.
From the LQG theory \cite{book_DPOC}, the cost $J$ for a given  schedule $\sigma$ is equal to $\sum_{t=0}^{T } {\rm tr }( K_{t}(\sigma) W_{t-1})$.

Before proceeding, to maintain brevity in the subsequent analysis, we define two matrix valued functions:
\begin{subequations} \label{function_gh}
	\begin{align}
		 & g_t(j,M) :=M \nonumber \\
		&~~~-M{B_t(j)} \big({B_t(j)}\!^\top M {B_t(j)}  +R_t(j)\big)^{-1}{B_t(j)}\!^\top M, \label{function_g}\\
		&h_t(M) :=A_{t}^\top M A_{t} +Q_t. \label{function_h}
	\end{align}
\end{subequations}

By substituting \eqref{function_gh} into \eqref{ARE_i}, we obtain that 
\begin{subequations} \label{ARE_modified}
	\begin{align}
		& K_{t\mid t+1}(\sigma) :=g_{ t}({ \sigma(t)},K_{t+1}(\sigma)), \label{K_t_mid}\\
		& K_{t}(\sigma)=h_t( K_{t\mid t+1}(\sigma)),\quad K_{T}=Q_T.\label{K_t}
	\end{align}
\end{subequations}
{ In what follows, we will supress $\sigma (t) = \sigma$ to maintain notation brevity.}
The optimal actuator scheduling problem that we consider is then formulated as follows.
\begin{problem}[Actuator Scheduling Problem]\label{prob1}
	Given   system \eqref{system} and $N$ actuators,
	 find a  schedule $\sigma:[0,T-1] \to \mathsf{N}$ that solves the following optimization problem:
	\begin{align*}
		\min \ &{ \textstyle \sum_{t=0}^{T } {\rm tr }( K_{t}(\sigma) W_{t-1})}\\
		\textnormal{subject to} \     & K_{t\mid t+1}(\sigma)=g_{ t}({ \sigma},K_{t+1}(\sigma)), \ {Q_T \succ 0,} \\
		& K_{t}(\sigma)=h_t( K_{t\mid t+1}(\sigma)),\ K_{T}=Q_T 
	\end{align*}
	with the variables $\sigma, K_t, K_{t\mid t+1}.$
\end{problem}




Problem \ref{prob1} is combinatorial in nature and NP-hard in general \cite{Tzoumas2016cdc}.
%
We now propose an efficient solution to Problem~\ref{prob1} using a convex relaxation.

\section{Actuator Scheduling with Suboptimality Guarantees} \label{section_solution}

In this section, we solve  Problem \ref{prob1} and provide a suboptimal solution  that  is computationally inexpensive. 
We will propose a convex relaxation to the problem (see Problem~\ref{prob_control_relax}) and will use the solution of the relaxed problem as a `reference' to find a solution to Problem~\ref{prob1}.
In Section~\ref{subsec_scheduling} we will propose a \textit{tracking algorithm} that finds a solution which is `close' to the reference solution found from solving the relaxed convex optimization problem. 
The suboptimality bound of the proposed algorithm is discussed using dynamic programming type arguments in Section~\ref{subsec_subopt}.



Before proceeding, we first reformulate Problem \ref{prob1} into a form that is easier for the analysis afterwards. 
According to \eqref{function_h} and \eqref{K_t}, we  have $ K_t(\sigma) = A_t\T K_{t|t+1}(\sigma)A_t + Q_t$, $t=0,1,\ldots, T-1$ and $ K_{T} =Q_T$. Subsequently, we obtain
\begin{equation}\label{eq_change}
\textstyle    \sum_{t=0}^{T } {\rm tr }( K_{t} (\sigma)W_{t-1}) = \sum_{t=0}^{T-1} {\rm tr }( K_{t|t+1}(\sigma) \bar W_{t-1}) +  r,
\end{equation}
where $r = \sum_{t=0}^{T } {\rm tr }( Q_{t} W_{t-1}) $ and $\bar{W}_{t-1} =  A_t W_{t-1}A_t\T$.
Note that $r$ is independent of  $\sigma, K_t(\sigma)$ and $K_{t\mid t+1}(\sigma).$

Next, we define two matrices $P_t$ and $P_{t|t+1}$   as follows
\begin{align*}
\textstyle	
P_t(\sigma) := K_t^{-1}(\sigma) ,\quad P_{t\mid t+1}(\sigma) :=  K^{-1}_{t\mid t+1}(\sigma).
\end{align*}
%
According to Woodbury matrix equality
,     we have
\begin{align}\label{P}
	P_{t\mid t+1}(\sigma) =  P_{ t+1}(\sigma) + B_t ({\sigma}) R_t^{-1} ({\sigma})  B_t^\top ({\sigma}).
\end{align}

Using \eqref{eq_change} and  the new variables $P_{t\mid t+1}(\sigma) $, $ P_{t}(\sigma) $, Problem~\ref{prob1} can be rewritten as Problem~\ref{prob2}.
\begin{problem} \label{prob2}
	Given system \eqref{system} with $N$  actuators,
	find a  schedule $\sigma :[0,T-1]\to \mathsf{N}$ that solves the following:
	\begin{align*}
		\min \ & {\textstyle \sum_{t=0}^{T -1} {\rm tr }( K_{t|t+1} (\sigma)\bar W_{t-1})}\\
		\textnormal{subject to} \  &  K_{t\mid t+1}(\sigma) =  P^{-1}_{t\mid t+1}(\sigma), \ { Q_T \succ 0,} \\ 
		& P_{t\mid t+1}(\sigma) =  P_{ t+1}(\sigma) + B_t ({\sigma}) R_t^{-1} ({\sigma})  B_t^\top ({\sigma}), \\
		& {P^{-1}_t(\sigma) = h_t( K_{t\mid t+1}(\sigma))},\  P_{T}=Q_T^{-1} 
	\end{align*}
	with variables $\sigma, K_{t}, K_{t\mid t+1}, P_{t}, P_{t\mid t+1}.$
\end{problem}

Note that, although the  constraints in Problem~\ref{prob1} and Problem~\ref{prob2}   appear differently, one can in fact verify that these two problems are equivalent. 
%


Let us denote $V_t(\sigma) := B_t ({\sigma}) R_t^{-1} ({\sigma})  B_t^\top ({\sigma})$ and the set $\mathsf{V}_t := \{{B_t(j)} {R_t(j)}^{-1} {B_t(j)}^\top~:~j \in \mathsf{N}\}$.
Therefore, we may rewrite the constraints in Problem~\ref{prob2} to be %
$K_{t\mid t+1}  =  P^{-1}_{t\mid t+1},\ P_{t\mid t+1} =  P_{ t+1}  + V_t, \ V_t \in \mathsf{V}_t, \  P_{t}   = \big(h_t(  K_{t\mid t+1} )\big)^{-1},$ and  $P_{T}=Q_T^{-1}.$
We have suppressed the arguments $\sigma$ in the variables to maintain notational brevity.
We can further relax the  constraints in Problem~\ref{prob2} to their equivalent matrix inequality  $K_{t\mid t+1}\succeq P^{-1}_{t\mid t+1}$, and $ P_{t}   \preceq (h_t(  K_{t\mid t+1} ))^{-1}$. 
Using Schur complement, one may write $K_{t\mid t+1}\succeq P^{-1}_{t\mid t+1}$ as the Linear matrix inequality {\small $\begin{bmatrix} K_{t\mid t+1} & I \\ I & P_{t\mid t+1} \end{bmatrix} $} $ \succeq  0$.
Similarly, using the definition of $h_t(\cdot)$ from \eqref{function_h}, the Woodbury matrix inverse identity, and Schur complement, we obtain the following problem from Problem~\ref{prob2}.
\begin{problem} \label{prob_control_relax}
Given $\mathsf{V}_t := \{{B_t(j)} {R_t(j)}^{-1} {B_t(j)}^\top ~: ~j \in \mathsf{N}\} \subset \R^{n \times n}$, solve the following optimization problem
	\begin{align*}
		\min \  & {\textstyle \sum_{t=0}^{T-1} {\rm tr }( K_{t|t+1} \bar W_{t-1})}\\
		\textnormal{subject to} \   & P_{t\mid t+1} =  P_{ t+1}  + V_t,\ V_t \in \mathsf{V}_t, \ P_{T}=Q_T^{-1}, \\
		&\begin{bmatrix} K_{t\mid t+1} & I \\ I & P_{t\mid t+1} \end{bmatrix}  \succeq  0,\ { Q_T \succ 0,}\\
		&\begin{bmatrix}
			Q_{t}^{-1} - P_{t} & Q_{t}^{-1} A_t^\top \\ 
			A_t Q_{t}^{-1} &  P_{t\mid t+1} + A_t Q_t^{-1} A_t^\top \end{bmatrix} \succeq 0  
	\end{align*}
	with variables $ K_{t\mid t+1}, P_{t\mid t+1},  P_{t}$, and $V_t
	$.
\end{problem}
Notice that the constraint $V_t \in \mathsf{V}_t$ is sufficient to enforce the scheduling constraint $\sigma:[0,T-1] \to \mathsf{N}$.

While Problem~\ref{prob_control_relax} is a relaxation of Problem~\ref{prob2}, we now show a key result that  an optimal solution to Problem~\ref{prob_control_relax} is also an optimal solution to Problem~\ref{prob2}.

\begin{thm} \label{thm_equivalnece}
An optimal solution of the relaxed problem (Problem \ref{prob_control_relax}) is also an optimal solution of the original problem (Problem \ref{prob2}), and vice-versa.
\end{thm}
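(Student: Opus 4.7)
The plan is to establish that the two problems share the same optimal value by a two-way inclusion. The easy direction is immediate: every feasible tuple of Problem~\ref{prob2} satisfies the LMIs of Problem~\ref{prob_control_relax} (the equalities $K_{t\mid t+1} = P_{t\mid t+1}^{-1}$ and $P_t = (h_t(K_{t\mid t+1}))^{-1}$ are just the boundary cases of the relaxed inequalities), and the objective is unchanged, so the optimal value of Problem~\ref{prob_control_relax} is at most that of Problem~\ref{prob2}. The content of the theorem lies in the reverse direction, which I would establish constructively: given any optimal tuple $(K^*_{t\mid t+1}, P^*_{t\mid t+1}, P^*_t, V^*_t)$ of Problem~\ref{prob_control_relax}, I build a feasible tuple for Problem~\ref{prob2} with objective no larger than the relaxed optimum.

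The construction proceeds by backward induction in $t$, keeping the $V^*_t$ fixed. Set $\tilde P_T := Q_T^{-1}$ and, for $t = T-1, \ldots, 0$, define
\begin{align*}
    \tilde P_{t\mid t+1} &:= \tilde P_{t+1} + V^*_t, \\
    \tilde K_{t\mid t+1} &:= \tilde P_{t\mid t+1}^{-1}, \\
    \tilde P_t &:= \big(h_t(\tilde K_{t\mid t+1})\big)^{-1}.
\end{align*}
All equality constraints of Problem~\ref{prob2} then hold by construction, so the tuple $(\tilde K, \tilde P_{|}, \tilde P, V^*)$ is feasible for Problem~\ref{prob2}. I claim $\tilde P_t \succeq P^*_t$ and $\tilde P_{t\mid t+1} \succeq P^*_{t\mid t+1}$ for every $t$, which I prove by backward induction on $t$. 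The base $\tilde P_T = P^*_T = Q_T^{-1}$ is trivial. Given $\tilde P_{t+1} \succeq P^*_{t+1}$, the step chains three monotonicities: adding $V^*_t \succeq 0$ on both sides gives $\tilde P_{t\mid t+1} \succeq P^*_{t\mid t+1}$; inversion on positive-definite matrices reverses the ordering, yielding $\tilde K_{t\mid t+1} = \tilde P_{t\mid t+1}^{-1} \preceq (P^*_{t\mid t+1})^{-1} \preceq K^*_{t\mid t+1}$, where the last inequality is the Schur-complement LMI of the relaxation at optimality; and because $h_t(M) = A_t^\top M A_t + Q_t$ is order-preserving while inversion order-reverses, $\tilde P_t = (h_t(\tilde K_{t\mid t+1}))^{-1} \succeq (h_t(K^*_{t\mid t+1}))^{-1} \succeq P^*_t$, the last step again being the relaxed LMI at optimality.

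Since $\tilde K_{t\mid t+1} \preceq K^*_{t\mid t+1}$ and $\bar W_{t-1} \succeq 0$, summing $\tr(\tilde K_{t\mid t+1} \bar W_{t-1}) \leq \tr(K^*_{t\mid t+1} \bar W_{t-1})$ over $t$ shows the constructed tuple attains objective no larger than the relaxed optimum, closing the reverse direction. The main subtlety I expect to handle is confirming that every matrix inverted along the way is strictly positive definite so the monotonicity chain is well defined: $\tilde P_{t\mid t+1} \succ 0$ follows inductively from $\tilde P_T = Q_T^{-1} \succ 0$ together with $V^*_t \succeq 0$, and $h_t(\tilde K_{t\mid t+1}) \succeq Q_t \succ 0$ guarantees the final inversion makes sense. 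Once these routine positivity checks are in place, the backward-induction argument closes cleanly and yields the claimed equivalence.
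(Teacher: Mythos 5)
Your proposal is correct and follows essentially the same route as the paper's proof: both directions are handled identically, with the nontrivial direction established by re-running the exact recursion backward from $\tilde P_T = Q_T^{-1}$ using the relaxed solution's $V_t$ (the paper writes this as $\bar V_t = P_{t\mid t+1} - P_{t+1}$, which equals $V_t^*$ by the equality constraint), followed by the same backward-induction monotonicity chain and the trace comparison. Your explicit verification that each inverted matrix is positive definite is a welcome detail the paper leaves implicit, but it does not constitute a different argument.
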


\iflongversion

\begin{proof}
 The proof of this theorem is along the lines of \cite[Theorem~1]{MAITY2022110078}.
	%
	First, note that, due to the relaxations, any feasible solution of Problem~\ref{prob2} is a feasible solution for Problem~\ref{prob_control_relax}, and hence the optimal solution of Problem~\ref{prob2} is a feasible solution for Problem~\ref{prob_control_relax}.
	The theorem is proved once we show that for every feasible solution of Problem \ref{prob_control_relax} there exists a feasible solution for Problem \ref{prob2} that produces the same, if not a smaller, objective value.
	
	In order to show that, let the tuple $\{K_{t\mid t+1}, P_{t\mid t+1},  P_{t}\}$   denote a feasible solution of Problem \ref{prob_control_relax}. 
	Let us construct a  new tuple $\{\bar{K}_{t\mid t+1}, \bar{P}_{t\mid t+1},  \bar{P}_{t}\}$  as follows
	\begin{equation}\label{variables_bar}
		\begin{aligned}
			&   \bar{P}_{t\mid t+1}  =  \bar{P}_{t+1}  + \bar{V}_t ,\quad \bar{P}_{t}   = \big(h_t(  \bar{K}_{t\mid t+1} )\big)^{-1}  \\
			& \bar{V}_t =  {P}_{t\mid t+1} - {P}_{ t+1}   ,\quad   \bar{K}_{t\mid t+1}  = \bar{P}^{-1}_{t \mid t+1} ,\quad \bar{P}_{T}= P_T. 
		\end{aligned} 
	\end{equation}
	
	We will now show that 	$\bar{P}_{t \mid t+1} \succeq {P}_{t \mid t+1}$, $\bar K_{t \mid t+1}\preceq K_{t \mid t+1}$ and $\bar P_{t+1} \succeq P_{t+1}$ for all $t$.
	We start with $t=T$, it   follows from $\bar{P}_{T}= P_T$ that $\bar{P}_{T} \succeq P_T$. Since $\bar{P}_{t\mid t+1}  =  \bar{P}_{t+1}  + \bar{V}_t$ and $ \bar{V}_t =  {P}_{t\mid t+1} - {P}_{ t+1} $, it follows that $\bar{P}_{T-1\mid T} - {P}_{T-1\mid T}   =  \bar{P}_{T}   - {P}_{T}$. 
		Since $\bar{P}_{T} \succeq P_T$, this implies that $ \bar{P}_{T-1\mid T} \succeq {P}_{T-1\mid T}.$ 
		Note that $\bar{P}_{T-1\mid T} \succeq {P}_{T-1\mid T} \succ0$, it then follows that $\bar{P}_{T-1\mid T}^{-1} \preceq {P}_{T-1\mid T}^{-1}$. 
		Now, recall that  $K_{t\mid t+1}\succeq P^{-1}_{t\mid t+1}$, and  that $\bar{K}_{t\mid t+1}  = \bar{P}^{-1}_{t \mid t+1} $ in \eqref{variables_bar}. It then follows that
		\begin{align*}
		    \bar{K}_{T-1\mid T}  = \bar{P}^{-1}_{T-1 \mid T} \preceq {P}_{T-1\mid T}^{-1} \preceq {K}_{T-1\mid T} ,
		\end{align*}
		that is $\bar{K}_{T-1\mid T}\preceq {K}_{T-1\mid T}.$
Next, recall   that 
		\begin{align*}
		 &  \bar{P}_{t} ^{-1} =  h_t(\bar{K}_{t\mid t+1} ) :=A_{t}^\top \bar{K}_{t\mid t+1} A_{t} +Q_t ,\\
		  &	    {P}_{t} ^{-1} =  h_t({K}_{t\mid t+1} ) :=A_{t}^\top {K}_{t\mid t+1} A_{t} +Q_t.
		\end{align*}	
		It then follows from $\bar{K}_{T-1\mid T}\preceq {K}_{T-1\mid T}$ that $\bar P_{T-1}^{-1} \preceq P_{T-1}^{-1} $. This implies that $ \bar P_{T-1}  \succeq P_{T-1} .$  
		By induction, the same procedure applies for all $t = T-1, T-2,\ldots, 0$.
	Thus,
	$\bar{P}_{t \mid t+1} \succeq {P}_{t \mid t+1}$, $\bar K_{t \mid t+1}\preceq K_{t \mid t+1}$ and $\bar P_{t+1} \succeq P_{t+1}$ for all~$t$.
	
	Next, let   $ K_{t\mid t+1}, P_{t\mid t+1},  P_{t}$, and $V_t$ be solutions of Problem 3. Then $ K_{t\mid t+1}, P_{t\mid t+1},  P_{t}$, and $V_t$ satisfy the constraints of Problem 3, in particular, $P_{t\mid t+1} =  P_{ t+1}  + V_t,\ V_t \in \mathsf{V}_t$. This implies that 
$  V_t = P_{t\mid t+1} -  P_{ t+1}  ,\ V_t \in \mathsf{V}_t.$ 
Now, recall that in \eqref{variables_bar} we have chosen  $\bar{V}_t =  {P}_{t\mid t+1} - {P}_{ t+1} $. It then follows from   $\bar{V}_t =  {P}_{t\mid t+1} - {P}_{ t+1} $ and $ V_t = P_{t\mid t+1} -  P_{ t+1}  ,\ V_t \in \mathsf{V}_t $ that   $\bar V_t \in  {\sf V}_t$.

	%
Now, since the tuple $\{\bar{K}_{t\mid t+1}, \bar{P}_{t\mid t+1} ,  \bar{P}_{t}\}$ satisfies all the constraints of Problem~\ref{prob2}, this implies that it is a feasible solution of Problem~\ref{prob2}. 
	%
	%
	%
Next, note that  $ \bar{K}_{t \mid t+1} \preceq {K}_{t \mid t+1}$ for all $t$,	this then implies that 
	$\sum_{t=0}^{T-1}\tr(\bar{K}_{t \mid t+1} \bar W_{t-1}) \preceq \sum_{t=0}^{T-1}\tr(K_{t \mid t+1} \bar W_{t-1})$.
	Therefore, for any feasible solution of Problem~\ref{prob_control_relax} we can construct a feasible solution for Problem~\ref{prob2} that produces the same, if not less, cost.
	This completes the proof.  
\end{proof}

\else

\begin{proof}
\blue
 The proof of this theorem is along the lines of \cite[Theorem~1]{MAITY2022110078}.
	%
	First, note that, due to the relaxations, any feasible solution of Problem~\ref{prob2} is a feasible solution for Problem~\ref{prob_control_relax}, and hence the optimal solution of Problem~\ref{prob2} is a feasible solution for Problem~\ref{prob_control_relax}.
	The theorem is proved once we show that for every feasible solution of Problem \ref{prob_control_relax} there exists a feasible solution for Problem \ref{prob2} that produces the same, if not a smaller, objective value.
	
	In order to show that, let the tuple $\{K_{t\mid t+1}, P_{t\mid t+1},  P_{t}\}$   denote a feasible solution of Problem \ref{prob_control_relax}. 
	Let us construct a  new tuple $\{\bar{K}_{t\mid t+1}, \bar{P}_{t\mid t+1},  \bar{P}_{t}\}$  as follows
	\begin{equation}\label{variables_bar}
		\begin{aligned}
			&   \bar{P}_{t\mid t+1}  =  \bar{P}_{t+1}  + \bar{V}_t ,\quad \bar{P}_{t}   = \big(h_t(  \bar{K}_{t\mid t+1} )\big)^{-1}  \\
			& \bar{V}_t =  {P}_{t\mid t+1} - {P}_{ t+1}   ,\quad   \bar{K}_{t\mid t+1}  = \bar{P}^{-1}_{t \mid t+1} ,\quad \bar{P}_{T}= P_T. 
		\end{aligned} 
	\end{equation}
	It then follows from \eqref{variables_bar} that   $\bar{P}_{t \mid t+1} \succeq {P}_{t \mid t+1}$, $\bar K_{t \mid t+1}\preceq K_{t \mid t+1}$ and $\bar P_{t+1} \succeq P_{t+1}$ for all $t$.
	Note also that the matrix $\bar V_t$ in \eqref{variables_bar}  satisfies $\bar V_t\in \mathsf V_t$.
Since the tuple $\{\bar{K}_{t\mid t+1}, \bar{P}_{t\mid t+1} ,  \bar{P}_{t}\}$ satisfies all the constraints of Problem~\ref{prob2}, this implies that it is a feasible solution of Problem~\ref{prob2}. 
	%
	%
	%
Next, note that  $ \bar{K}_{t \mid t+1} \preceq {K}_{t \mid t+1}$ for all $t$,	this then implies that 
	$\sum_{t=0}^{T-1}\tr(\bar{K}_{t \mid t+1} \bar W_{t-1}) \preceq \sum_{t=0}^{T-1}\tr(K_{t \mid t+1} \bar W_{t-1})$.
	Therefore, for any feasible solution of Problem~\ref{prob_control_relax} we can construct a feasible solution for Problem~\ref{prob2} that produces the same, if not less, cost.
	This completes the proof.  
\end{proof}

\fi








\begin{remark}
Theorem \ref{thm_equivalnece} shows that the LMI-based relaxations introduced in Problem \ref{prob_control_relax} do not affect the optimality, since an optimal solution to the relaxed problem is also optimal for the original problem.
This is a key advantage of this approach, as the LMI-based relaxations retain the optimality.
Moreover, since Problem \ref{prob_control_relax} is a mixed integer semidefinite program, one may attempt to directly solve it using available numerical techniques~\cite{gally2018framework}. 
\end{remark}

Next, note that Problem~\ref{prob_control_relax} is convex if $\mathsf{V}_t$ is a convex set for all $t$. 
When $\mathsf{V}_t$ is not convex, one could take the convex hull of the set $\mathsf{V}_t$ to make Problem~\ref{prob_control_relax} convex.
In our case, since $\mathsf{V}_t$ is a collection of $N$ matrices $\{V_t(1),\ldots,V_t(N)\}$ where $V_t(j) = {B_t(j)} {R_t(j)}^{-1} {B_t(j)}^\top$ for all $t$, we replace the constraint $V_t \in \mathsf{V}_t$ with the constraints $V_t=\sum_{i=1}^N \theta^i_t V_t(i)$, $\theta^i_t\in [0,1]$ and $\sum_{i=1}^N \theta^i_t=1$.
In this case, Problem~\ref{prob_control_relax} can be further simplified to Problem~\ref{prob_SDP}.
\begin{problem}\label{prob_SDP}
	\begin{align*}
		\min \ & { \textstyle \sum_{t=0}^{T-1} {\rm tr }( K_{t|t+1} \bar W_{t-1})}\\
		\textnormal{subject to}\  & P_{t\mid t+1} =  P_{ t+1}  +   \textstyle \sum_{i=1}^N \theta^i_t V_t(i), \  P_{T}=Q_T^{-1},\\
&  \textstyle \sum_{i=1}^N \theta^i_t =1, 0 \le \theta^i_t \le 1, \begin{bmatrix} K_{t\mid t+1} & I \\ I & P_{t\mid t+1} \end{bmatrix}  \succeq  0,\\
&\begin{bmatrix}
	Q_{t}^{-1} - P_{t} & Q_{t}^{-1} A_t^\top \\ 
	A_t Q_{t}^{-1} &  P_{t\mid t+1} + A_t Q_t^{-1} A_t^\top \end{bmatrix} \succeq 0, {Q_T \succ 0}  \nonumber
	\end{align*}
	\rev{with variables $\theta_t^i, K_{t\mid t+1}, P_{t \mid t+1},   P_{t}$.}
\end{problem}

At this point we have a convex optimization problem (semidefinite program) in Problem~\ref{prob_SDP} which is much easier to solve compared to the mixed integer semidefinite program in Problem~\ref{prob_control_relax}.
If the optimal $\theta_t^j$ is binary-valued then the optimal schedule to Problem~\ref{prob1} is found by setting $\sigma(t) = j$ such that $\theta^j_t =1$. 
However, in general the optimal $\theta^j_t$ are not binary-valued and we need to design an algorithm to find a schedule $\sigma$ from the solution to Problem~\ref{prob_SDP}.

\begin{remark}\label{rem_actuator_selection}
{ At first glance, it may seem that selecting the actuator with the maximum value of $\theta^i_t$ at each time will lead to the smallest value of  ${\rm tr }( K_{t|t+1} \bar W_{t-1})$. 
However, it is not necessarily the case (see simulation   in Section~\ref{sec_simu}).
In the next section we propose a more  efficient algorithm and discuss its suboptimality bound.
%
}
\end{remark}

\subsection{Actuator Scheduling Algorithm}\label{subsec_scheduling}




By solving the convex relaxation in Problem~\ref{prob_SDP}, we obtain $\{\{{\theta^i_t}^o\}_{i\in \mathsf{N}}\}_{t=0}^{T-1}$, or equivalently $V^o_t=\sum_{i=1}^N{\theta^i_t}^o V^i_t$ and the associated $K^o_{t\mid t+1}$, $P^o_{t \mid t+1}$ and   $P^o_{t}$.
%
In this section, we propose an algorithm that uses this solution of Problem \ref{prob_SDP} as a reference to obtain a suboptimal solution for Problem \ref{prob1}. The corresponding algorithm is presented in Algorithm \ref{algm_1}.  Note that this algorithm depends linearly on the number of the actuators.

Algorithm~\ref{algm_1} takes the solution $\{K^o_{t \mid t+1}\}_{t=0}^{T-1}$ obtained from solving Problem~\ref{prob_SDP} as an initial guess, and initializes the terminal condition $K_{T}$ at $Q_T$.
%
The algorithm produces a  trajectory $\{K_{t \mid t+1}\}_{t=0}^{T-1}$ that is \textit{close} to the reference trajectory $\{K^o_{t \mid t+1}\}_{t=0}^{T-1}$ in Frobenius norm. %
{ 
The reasoning behind the construction of Algorithm~\ref{algm_1} is to keep the matrices $ K_{t \mid t+1}(\sigma)$ close to $ K_{t \mid t+1}^o $, and  subsequently, to keep $ \sum_{t=0}^{T-1}\tr(K_{t \mid t+1}(\sigma) \bar W_{t-1} ) $   close to $ \sum_{t=0}^{T-1}\tr( K_{t \mid t+1}^o \bar W_{t-1} )$, since $ \sum_{t=0}^{T-1}\tr( K_{t \mid t+1}^o \bar W_{t-1}) $ is the lowest  one that could possibly be achieved given the set of actuators.
The algorithm can be regarded as a \textit{trajectory-tracking} problem in the space of positive definite matrices where $ \{K_{t \mid t+1}^o\}_{t \ge 0} $ serves as the reference trajectory. 
}


\begin{algorithm}[t]
\begin{algorithmic}[1]
\State	\textbf{Input}  $\{K^o_{t \mid t+1}\}_{t=0}^{T-1}$, $K_{T}=Q_T$
\For{$ t=T-1:0$}
\State	$M_{t}(i)\leftarrow g_t(i,K_{ t+1}),\quad i\in \mathsf{N}$
\State	$\sigma(t)\leftarrow \arg\!\min_i \| K^o_{t \mid t+1}    -M_t(i) \|_F$
\State	$K_{t \mid t+1}\leftarrow g_{t}(\sigma(t),K_{t+1})$
\State	$K_{t}\leftarrow h_t( K_{t \mid t+1})$
\EndFor
%
\State	\textbf{Output} $ \sigma$
	\caption{Reference Tracking Algorithm} \label{algm_1}
\end{algorithmic}
\end{algorithm}

\iflongversion

\else 

{\blue Although Algorithm~\ref{algm_1} is heuristic,  we can use dynamic programming type of arguments  to justify this algorithm.
To this end, we denote the associated value function  of  Problem~\ref{prob2} as
{
\begin{align} \label{E:value}
\textstyle	U_t(K)=\min_{    \{\sigma(k)\}_{k=0}^{t}}    \sum_{k=0}^{t} {\rm tr}( K_{k \mid k+1}(\sigma) \bar W_{k-1}),
\end{align}
}%
given $K_{t+1}=K$ for some $K\succ 0$.
Likewise, we denote the value function associated with Problem \ref{prob_SDP}, which is the SDP relaxation of Problem~\ref{prob1}, to be
%
\begin{align}
U_t^o(K)=\min_{\{\{\theta^i_k\}_{i\in \mathsf{N}}\}_{k=0}^{t}}   \textstyle \sum_{k=0}^{t} {\rm tr}( K_{k \mid k+1}(\theta) \bar W_{k-1}).
\end{align}
It in fact can be shown that 
\begin{equation*}
    U_t(K) \leq \alpha_t +U^o_t(K^o_{t+1})    +  c_1  \min_{\sigma(t)}  \|K_{t  \mid t+1}(\sigma)-K^o_{t  \mid t+1}\|_F,   
\end{equation*}
where  $ K_t = A_t\T K_{t|t+1}A_t + Q_t$,  $c_1 := \| \bar W_{t-1} \|_F  + c  \|A_{t } \|_F^2  $ and $\alpha_t >0$ depends on $t$ but not $K$ or $\sigma$.
Thus, optimizing $ \min_{\sigma}  \|K_{t  \mid t+1}(\sigma)-K^o_{t  \mid t+1}\|_F   $  in Algorithm \ref{algm_1} in fact minimizes an upper bound of the value function $U_t$, or equivalently, an upper bound of $\sum_{t=0}^{T-1} {\rm tr}( K_{t|t+1} \bar W_{t-1})$.
Therefore, in essence, Algorithm~\ref{algm_1}  performs an approximate dynamic programming type optimization by minimizing an upper bound of $U_t$.

The interested reader is referred to \cite{jiao2022} for   detailed derivations.

\subsection{Suboptimality Guarantees}\label{subsec_subopt}

}

\fi

\iflongversion

In the next subsection,  we  will provide the analysis on the suboptimality guarantee of Algorithm \ref{algm_1}.

\subsection{Dynamic Programming and Suboptimality Guarantees}\label{subsec_subopt}
We denote the associated value function  of  Problem~\ref{prob2} as
{
\begin{align} \label{E:value}
\textstyle	U_t(K)=\min_{    \{\sigma(k)\}_{k=0}^{t}}    \sum_{k=0}^{t} {\rm tr}( K_{k \mid k+1}(\sigma) \bar W_{k-1}),
\end{align}
}%
given $K_{t+1}=K$ for some $K\succ 0$.
Likewise, we denote the value function associated with Problem \ref{prob_SDP}, which is the SDP relaxation of Problem~\ref{prob1}, to be
%
{
\begin{align}
U_t^o(K)=\min_{\{\{\theta^i_k\}_{i\in \mathsf{N}}\}_{k=0}^{t}}   \textstyle \sum_{k=0}^{t} {\rm tr}( K_{k \mid k+1}(\theta) \bar W_{k-1}).
\end{align}
}%
The difference between $U_t$ and $U^o_t$ is that the feasible choice of an actuator at time $t$ for $U_t$ has to be one of the $\{V^i_t\}_{i\in \mathsf{N}}$ (or equivalently $\{B^i_t,R^i_t\}_{i\in \mathsf{N}}$), while that for $U^o_t$ is any of the actuators that lie within the convex hull of $\{V^i_t\}_{i\in \mathsf{N}}$.
Therefore,  $U^o_t(K) \le U_t(K)$ for all $K\succ 0$. 

Using dynamic programming, we can write
	\begin{align*}
		U_t(K)=&\min_{\sigma(t)}\left(\tr( K_{t \mid t+1}(\sigma) \bar W_{t-1}) +U_{t-1}(K_t(\sigma))\right)\\
		=&\min_{\sigma(t)}\Big( \tr\big( g_t(\sigma, K) \bar W_{t-1} \big) +U_{t-1}\Big(h_{ t}\big(g_{t} (\sigma, K ) \big) \Big),\\  
		U_0(K)=& \min_{\sigma(0)} \tr\big(g_0(\sigma, K) \bar W_{-1}\big).
\end{align*}
In the following, by exploiting   properties of  $U_t$ and the solutions ($ K^o_{t | t+1}, P^o_{t | t+1}$ and $P^o_t$) obtained from Problem~\ref{prob_SDP}, we   provide an approximate value function associated with~\eqref{E:value}.





Before proceeding, let us present some useful properties of the map $g_t(\cdot,\cdot)$  defined in \eqref{function_g}, which are essential in the subsequent analysis. 
Using Lemma~1-e from \cite{sinopoli2004kalman}, one can prove that, for any fixed $i\in \mathsf{N}$,  $g_t(i,M)$ is concave in $M$.
Furthermore, we can characterize the derivative of the function $g_i(i,\cdot)$ by the following lemma.
\begin{lem}[\cite{Vitus2012auto}] \label{lem_g_concave}
	For each $i\in \mathsf{N}$ and for any positive semi-definite matrices $M,X$, it follows that
	\begin{align}
		\frac{dg_t(i,M+\epsilon X)}{d\epsilon}\Big|_{\epsilon=0}= H_t(i,M) X H_t^\top(i,M),
	\end{align}
	where $	 H_t(i,M)=I-M{B_t(i)} \big({B_t(i)}\T M B_t(i) +R_t(i)\big)^{-1}{B_t(i)}\T$.
\end{lem}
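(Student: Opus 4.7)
The plan is to prove the lemma by direct first-order Taylor expansion of the map $g_t(i,M)$ in the perturbation $\epsilon$. Let me fix $i \in \mathsf{N}$ and abbreviate $B = B_t(i)$, $R = R_t(i)$, $S_0 = B^\top M B + R$, and $S_\epsilon = B^\top (M+\epsilon X) B + R = S_0 + \epsilon B^\top X B$. With this notation, the definition \eqref{function_g} reads
\[
g_t(i,M+\epsilon X) = (M+\epsilon X) - (M+\epsilon X) B \, S_\epsilon^{-1} B^\top (M+\epsilon X).
\]

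First, I would expand $S_\epsilon^{-1}$ using the standard perturbation identity $S_\epsilon^{-1} = S_0^{-1} - \epsilon\, S_0^{-1}(B^\top X B) S_0^{-1} + O(\epsilon^2)$, which holds because $S_0 \succ 0$ (since $R_t(i) \succ 0$). Next, I would substitute this expansion into the product $(M+\epsilon X)BS_\epsilon^{-1}B^\top(M+\epsilon X)$ and collect only the terms linear in $\epsilon$. Three contributions arise: one from $\epsilon X$ in the left outer factor, one from $\epsilon X$ in the right outer factor, and one from the $\epsilon$-correction in $S_\epsilon^{-1}$. After differentiation at $\epsilon=0$, the result is
\[
\frac{d g_t(i,M+\epsilon X)}{d\epsilon}\bigg|_{\epsilon=0}
= X - X B S_0^{-1} B^\top M - M B S_0^{-1} B^\top X + M B S_0^{-1} (B^\top X B) S_0^{-1} B^\top M.
\]

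Finally, I would verify by direct expansion that this right-hand side equals $H_t(i,M)\, X\, H_t^\top(i,M)$ with $H_t(i,M) = I - M B S_0^{-1} B^\top$. Multiplying out $(I - M B S_0^{-1} B^\top) X (I - B S_0^{-1} B^\top M)$ produces exactly the four terms above, completing the identification.

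This result is essentially a careful computation; there is no genuine obstacle. The only point requiring mild care is the bookkeeping of the cross-terms and confirming that the ``quadratic-in-$X$'' contribution from the inverse expansion combines cleanly with the linear pieces to form the symmetric sandwich $H_t X H_t^\top$. Well-posedness of the expansion is guaranteed by $R_t(i) \succ 0$, which ensures $S_\epsilon \succ 0$ in a neighborhood of $\epsilon = 0$.
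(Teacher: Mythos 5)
Your computation is correct: the first-order expansion of $S_\epsilon^{-1}$, the collection of the three linear-in-$\epsilon$ terms, and the identification with $(I - MBS_0^{-1}B^\top)X(I - BS_0^{-1}B^\top M)$ all check out, and positive definiteness of $R_t(i)$ does guarantee the expansion is well posed. The paper itself gives no proof --- it imports the lemma from the cited reference --- and your argument is exactly the standard derivation found there, so nothing further is needed.
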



The following proposition shows that  $U^o_t$ is locally Lipschitz, which is   important for analyzing Algorithm~\ref{algm_1}.

\begin{prop}\label{prop_Lipsc}
	For any two symmetric matrices $M_1\succ 0$ and $M_2\succ 0$ with bounded Frobenius norms, and for all $t=0,1,\ldots, T$, there exists a constant $c>0$  such that
	\begin{align}
			|U^o_t(M_1)-U^o_t(M_2)|\le c\ \| M_1-M_2  \|_F.
	\end{align}
\end{prop}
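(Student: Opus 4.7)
The plan is to prove the proposition by backward induction on $t$, combining the dynamic programming recursion for $U^o_t$ with the local Lipschitz continuity of $g_t$ and the global Lipschitz continuity of $h_t$. Under the convex relaxation of Problem~\ref{prob_SDP}, the value function satisfies
\begin{equation*}
U^o_t(K) = \min_{\theta \in \Delta_N} \bigl[\tr(g_t(\theta, K)\bar W_{t-1}) + U^o_{t-1}(h_t(g_t(\theta, K)))\bigr],
\end{equation*}
where $\Delta_N = \{\theta \in [0,1]^N : \sum_{i=1}^N \theta^i = 1\}$ is the unit simplex and, with a slight abuse of notation, $g_t(\theta, K) := (K^{-1} + \sum_{i=1}^N \theta^i V_t(i))^{-1}$ extends the map $g_t(i,\cdot)$ from \eqref{function_g} to convex combinations of actuators. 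The base case, $U^o_0(K) = \min_\theta \tr(g_0(\theta, K)\bar W_{-1})$, then reduces via the Cauchy-Schwarz bound on the trace inner product to establishing the local Lipschitz continuity of $g_0(\theta,\cdot)$ uniformly in $\theta$.

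The key technical step is to show that $g_t(\theta,\cdot)$ is locally Lipschitz on the positive definite cone, uniformly over $\theta \in \Delta_N$. Applying Lemma~\ref{lem_g_concave} to the extended map, the Gateaux derivative of $g_t(\theta,\cdot)$ at $M$ in direction $X$ is $H_t(\theta, M)\,X\,H_t^\top(\theta, M)$, where $H_t(\theta, M)$ depends continuously on $(\theta, M)$. Since $\Delta_N$ is compact and $M$ varies over a bounded set of positive definite matrices, $\|H_t(\theta, M)\|_F$ is uniformly bounded; integrating this derivative along the straight-line segment joining $M_2$ and $M_1$ then yields $\|g_t(\theta, M_1) - g_t(\theta, M_2)\|_F \le L_g \|M_1 - M_2\|_F$ for a constant $L_g > 0$ independent of $\theta$.

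For the inductive step, assume $U^o_{t-1}$ is locally Lipschitz with constant $c_{t-1}$. The elementary inequality $|\min_\theta f(\theta, M_1) - \min_\theta f(\theta, M_2)| \le \sup_\theta |f(\theta, M_1) - f(\theta, M_2)|$, together with the Cauchy-Schwarz bound $|\tr(X \bar W_{t-1})| \le \|\bar W_{t-1}\|_F \|X\|_F$ and the global Lipschitz estimate $\|h_t(X_1) - h_t(X_2)\|_F \le \|A_t\|_F^2 \|X_1 - X_2\|_F$, yields the recursion $c_t = L_g\bigl(\|\bar W_{t-1}\|_F + c_{t-1}\|A_t\|_F^2\bigr)$, which remains finite for all $t \le T$; the constant $c$ in the statement is then $\max_{0 \le t \le T} c_t$. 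The main obstacle I anticipate is verifying that the intermediate iterates $h_t(g_t(\theta, M))$ remain within a fixed bounded region when $M$ ranges over the prescribed bounded set, so that the same local Lipschitz constant $L_g$ applies at each stage of the induction. This should follow from the monotonicity $g_t(\theta, M) \preceq M$ and the affine structure of $h_t$, but a careful choice of the enclosing region is required to ensure uniformity.
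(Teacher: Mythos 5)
Your proof is correct and follows essentially the same route as the paper's: induction on $t$, the derivative characterization of $g_t(\theta,\cdot)$ from Lemma~\ref{lem_g_concave} together with boundedness of $H_t(\theta,M)$ to control $g_t(\theta,M_1)-g_t(\theta,M_2)$, and the Cauchy--Schwarz bound on the trace. The only differences are cosmetic: the paper extracts the bound on $g$ via the first-order concavity inequality evaluated at the minimizer $\theta^*$ for $M_2$ (rather than integrating the derivative along the segment and taking a supremum over $\theta$), and it dispatches the inductive step --- which you carry out explicitly, including the constant recursion and the boundedness of the intermediate iterates --- with the remark that it is ``similar'' to the base case.
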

%


\iflongversion

\begin{proof}
	We prove this in an inductive way. 
	Let us first consider the case that $t=0$,  we then have 
	\begin{align*}
	&	U_0^o(M_1)-U_0^o(M_2) \\
	=&\min_{\{\theta^i\}_{i\in \mathsf{N}}}\! \tr\big(g_0(\theta,M_1)\bar W_{-1}\big)-\!\!\min_{\{\theta^i\}_{i\in \mathsf{N}}}\!\tr\big(g_0(\theta,M_2)\bar W_{-1}\big)\\
		\overset{(a)}{\le} & \tr\Big( \big(g_0(\theta^*,M_1)-g_0(\theta^*,M_2) \big) \bar W_{-1} \Big)\\
		\overset{(b)}{\le}& \tr\Big( \big(H_0(\theta^*,M_2)(M_1-M_2)H_0\T(\theta^*,M_2)\big) \bar W_{-1}\Big)\\
		\le &\|M_1-M_2\|_F\|H_0\T(\theta^*,M_2) \bar W_{-1} H_0(\theta^*,M_2)\|_F 
	\end{align*}
	where $\theta^*=[\theta^{1*},\ldots,\theta^{N*}]$ in $(a)$ is a minimizer of $\tr(g_0(\theta,M_2)\bar W_{-1})$, and (b) follows from the concavity property of the function $g_0(\theta,\cdot)$ along with Lemma~\ref{lem_g_concave}. 
	From the expression of $H_0(\theta^*,M_2)$ in Lemma~\ref{lem_g_concave}, along with the fact that $M_2$ has a bounded Frobenius norm, one can verify that there exists a finite value $c>0$ such that $ \|H_0\T(\theta^*,M_2) \bar W_{-1} H_0(\theta^*,M_2)\|_F \le c$. 
	Therefore, 
	$|	U_0^o(M_1)-U_0^o(M_2) | \le c\ \| M_1-M_2 \|_F.$
 The inductive hypothesis can be proven in a similar way.
\end{proof}

\else

For a proof of Proposition \ref{prop_Lipsc}, we refer to \cite{jiao2022}.

\fi
The following proposition states that an upper bound on $U_t$ is found from $U_t^o$. 

\begin{prop} \label{Pr:Ualpha}
	For any time $t$ and $K\succ 0$ with bounded Frobenius norm, there exists a finite $\alpha_t>0$ such that
	\begin{align*}
		U_t(K) \le U^o_t(K)+\alpha_t.
	\end{align*}
\end{prop}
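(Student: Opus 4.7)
The plan is to prove the bound by induction on $t$, using the dynamic programming recursions for $U_t$ and $U^o_t$ together with the Lipschitz estimate from Proposition \ref{prop_Lipsc}. For the base case $t=0$, I would directly compare $\min_{\sigma}\tr(g_0(\sigma,K)\bar W_{-1})$ with $\min_\theta \tr(g_0(\theta,K)\bar W_{-1})$. Letting $\theta^{*}$ attain the latter minimum, any discrete actuator choice $\sigma^{*}$ yields a candidate upper bound on $U_0(K)$. Since $\mathsf{V}_0$ is a finite set and $K$ is restricted to symmetric positive definite matrices with bounded Frobenius norm, the gap $\tr(g_0(\sigma^{*},K)\bar W_{-1}) - \tr(g_0(\theta^{*},K)\bar W_{-1})$ is the supremum of a continuous function over a compact domain and hence bounded by a finite $\alpha_0$.

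For the inductive step, assume $U_{t-1}(M)\le U^o_{t-1}(M)+\alpha_{t-1}$ for every $M\succ 0$ of bounded Frobenius norm. Starting from the DP recursion, the inductive hypothesis gives
\begin{align*}
U_t(K) \le \min_{\sigma}\Big[\tr(g_t(\sigma,K)\bar W_{t-1}) + U^o_{t-1}\bigl(h_t(g_t(\sigma,K))\bigr)\Big] + \alpha_{t-1}.
\end{align*}
Let $\theta^{*}$ be a minimizer of the analogous expression for $U^o_t(K)$, and choose the discrete $\sigma^{*}$ minimizing $\Delta(K):=\|g_t(\sigma,K)-g_t(\theta^{*},K)\|_F$. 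Then the trace term differs by at most $\|\bar W_{t-1}\|_F\,\Delta(K)$, and applying Proposition \ref{prop_Lipsc} together with $h_t(X)-h_t(Y)=A_t^\top(X-Y)A_t$ yields
\begin{align*}
\bigl|U^o_{t-1}(h_t(g_t(\sigma^{*},K))) - U^o_{t-1}(h_t(g_t(\theta^{*},K)))\bigr| \le c\,\|A_t\|_F^2\,\Delta(K).
\end{align*}
Combining and noting that $U^o_{t-1}(h_t(g_t(\theta^{*},K)))+\tr(g_t(\theta^{*},K)\bar W_{t-1})=U^o_t(K)$ gives the desired bound with $\alpha_t:=\alpha_{t-1} + (\|\bar W_{t-1}\|_F+c\|A_t\|_F^2)\sup_K\Delta(K)$, where the supremum is finite because $g_t(\cdot,K)$ is continuous on the product of the probability simplex and the bounded-Frobenius-norm ball, both of which are compact.

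The main obstacle I anticipate is ensuring that the Lipschitz constant $c$ from Proposition \ref{prop_Lipsc} can be applied uniformly across all induction steps; that is, I must verify that the argument $h_t(g_t(\sigma,K))$ of $U^o_{t-1}$ stays inside a fixed bounded-Frobenius-norm region whenever $K$ does. I would handle this by observing that $g_t(\sigma,K)\preceq K$ (immediate from the Woodbury-based expression $g_t(\sigma,K)=(K^{-1}+V_t(\sigma))^{-1}$ with $V_t(\sigma)\succeq 0$), so $\|g_t(\sigma,K)\|_F\le\|K\|_F$, and $h_t$ only adds the fixed term $Q_t$ after conjugation by $A_t$. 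Thus the iterates $h_t(g_t(\sigma,K))$ remain in a common compact set, uniformly over the schedule, and the constants $\alpha_t$ generated along the induction are all finite.
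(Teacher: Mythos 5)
Your proposal is correct, but there is nothing in the paper to compare it against: Proposition~\ref{Pr:Ualpha} is stated without proof (the paper defers detailed derivations to an extended reference), so your induction actually fills a gap rather than paralleling an existing argument. Your route is the natural one and uses exactly the ingredients the paper assembles elsewhere --- the backward DP recursions for $U_t$ and $U_t^o$, the local Lipschitz property of $U_{t-1}^o$ from Proposition~\ref{prop_Lipsc}, and the bound $\tr(XY)\le \|X\|_F\|Y\|_F$ --- and it is consistent with how the paper later chains $U_t(K)\le \alpha_{t-1}+U^o_t(K^o_{t+1})+c_1\min_{\sigma}\|K_{t\mid t+1}(\sigma)-K^o_{t\mid t+1}\|_F$, which implicitly presupposes precisely the inductive structure you make explicit. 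Two small points to tighten. First, the set $\{K\succ 0:\|K\|_F\le B\}$ is bounded but not closed, hence not compact; your appeal to compactness is unnecessary anyway, since $0\preceq g_t(\cdot,K)\preceq K$ (from the Woodbury form $g_t(\cdot,K)=(K^{-1}+V_t)^{-1}$ with $V_t\succeq 0$) gives $\Delta(K)\le 2\sqrt{n}\,\|K\|_F$ directly, uniformly in the possibly discontinuous selection $\theta^*(K)$. Second, $\|A_t^\top M A_t\|_F\le \|A_t\|_F^2\|M\|_F$ is an inequality (via the spectral norm), not an identity, but an upper bound is all you need. With those cosmetic fixes the argument is complete, and your observation that the iterates $h_t(g_t(\sigma,K))\succeq Q_t\succ 0$ remain in a fixed bounded region correctly justifies applying the Lipschitz constant of Proposition~\ref{prop_Lipsc} uniformly along the induction.
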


Based on these propositions, we are now ready to perform an approximate dynamic programming using the value function $U_t(K)$ to design a suboptimal solution as follows.
Recall that the value function $U_t(K)$ satisfies
\begin{align*}
	U_t(K)=\min_{\sigma(t)}\left(\tr( K_{t \mid t+1}(\sigma) \bar W_{t-1}) +U_{t-1}(K (\sigma))\right),
\end{align*}
 it then follows from Proposition \ref{Pr:Ualpha} that  
\begin{align*}
	U_t(K) &\le \alpha_{t-1}+\min_{\sigma(t)}\left(\tr( K_{t \mid t+1}(\sigma) \bar W_{t-1})+U^o_{t-1}(K_{t}(\sigma))\right), \\
	&\le \alpha_{t-1} +U^o_t(K^o_{t+1})  +\min_{\sigma(t)}\big(\tr( K_{t \mid t+1}(\sigma) \bar W_{t-1}) \\
	&\qquad \qquad +U^o_{t-1}(K_{t}(\sigma))-U^o_t(K^o_{t+1}) \big),
\end{align*}
where $K^o_{t}$ is obtained from the convex relaxation in Problem~\ref{prob_SDP}. 
More specifically, by solving that relaxed problem, we obtain  $\{P^o_t,K^o_{t|t+1},P^o_{t|t+1}\}_{t=0}^{T-1}$ and, subsequently, we can construct $K^o_t=({P^{o}_{t}})^{-1}$.
				Thus, we have that $U^o_t(K^o_{t+1})=\tr(K^o_{t \mid t+1} \bar W_{t-1})+U^o_{t-1}(K^o_{t})$, and therefore,
					\begin{align*}  
						&U_t(K)\\
						& \le \alpha_{t-1} +U^o_t(K^o_{t+1})  +\min_{\sigma(t)}\big(\tr\big((K_{t \mid t+1}(\sigma)-K^o_{t \mid t+1})\bar W_{t-1}\big) \\ 
						&\qquad \qquad	+U^o_{t-1}(K_{t}(\sigma))-U^o_{t-1}(K^o_{t})\big)\\
						&\overset{(a)}{\le} \alpha_{t-1} +U^o_t(K^o_{t+1})   +\min_{\sigma(t)}\big(\tr((K_{t \mid t+1}(\sigma)-K^o_{t \mid t+1})\bar W_{t-1})\\
						&\qquad \qquad + c { \|K_{t}(\sigma)-K^o_{t} \|_F }\big) \\ 
					\end{align*}
				%
				where we have used Proposition 5 in (a).
				Next, note the fact that,  for two matrices $X,Y \in \mathbb{R}^{n \times n}$, we have ${\rm tr} (XY) \leq \|X\|_F \| Y \|_F$, and recall that $ K_t (\sigma)= A_t\T K_{t|t+1}(\sigma)A_t + Q_t$, $ K_t^o = A_t\T K_{t|t+1}^o A_t + Q_t$, we have 
				\begin{align*}  
					&U_t(K)\\
					&  {\le} \alpha_{t-1} +U^o_t(K^o_{t+1}) +\min_{\sigma(t)}\Big(  \|(K_{t \mid t+1}(\sigma)-K^o_{t \mid t+1})\|_F \|\bar W_{t-1}\|_F \\
					& \qquad \qquad + { c      \|A_t\T (K_{t|t+1}(\sigma) - K_{t|t+1}^o) A_t\|_F}\Big) \\ 
					&  \overset{(b)}{\le}  \alpha_{t-1} +U^o_t(K^o_{t+1})    +  c_1  \min_{\sigma(t)}  \|K_{t  \mid t+1}(\sigma)-K^o_{t  \mid t+1}\|_F   ,
				\end{align*}
				where we have used  $\|A_t\T (K_{t|t+1}(\sigma) - K_{t|t+1}^o) A_t\|_F =  \| A_t\|_F^2 \|K_{t  \mid t+1}(\sigma)-K^o_{t  \mid t+1}\|_F $ in (b), and $c_1 :=  \| \bar W_{t-1} \|_F  + c  \|A_{t } \|_F^2$.

Thus, optimizing $ \min_{\sigma}  \|K_{t  \mid t+1}(\sigma)-K^o_{t  \mid t+1}\|_F   $  in Algorithm \ref{algm_1} in fact minimizes an upper bound of the value function $U_t$, or equivalently, an upper bound of $\sum_{t=0}^{T-1} {\rm tr}( K_{t|t+1} \bar W_{t-1})$.
Therefore, in essence, Algorithm~\ref{algm_1}  performs an approximate dynamic programming type optimization by minimizing an upper bound of $U_t$.

\else

\fi

The following theorem provides a suboptimality bound of Algorithm~\ref{algm_1}. 

\begin{thm}\label{thm_main}
	Let $\sigma,\sigma^*$ and $\theta^*$ denote the schedule obtained from Algorithm~\ref{algm_1}, the true optimal schedule of Problem~\ref{prob1}, and the solution to Problem~\ref{prob_SDP}, respectively. Then, we have
	\begin{align} \label{suboptimal_bound} 
\textstyle		\sum_{t=0}^{T } {\rm tr}( K_{t}(\sigma) W_{t-1}) \le   \sum_{t=0}^{T } {\rm tr}( K_{t }(\sigma^*) W_{t-1}) +  \epsilon,
	\end{align}
	where 
	\begin{equation}\label{eq_epsilon}
 	\begin{aligned}
  	& 	\epsilon \triangleq     \| \bar W_{t-1}\|_F \Big( \textstyle \sum_{t=0}^{T-1}\frac{\lambda^{t+1}-1}{\lambda-1}\beta_{t} \\
	&\qquad +  \textstyle \sum_{t=0}^{T-1} \|K_{t \mid t+1}(\theta^*) - K_{t \mid t+1}(\sigma^*)\|_F \Big)
	\end{aligned}
	\end{equation}
with   $ \lambda_t \triangleq \| A_{t+1} \|^2 \| H_t(\sigma^*,K_{t+1}(\theta^*))\|^2, $  
$\beta_t \triangleq   \|  g_t(\sigma^*(t),K_{t+1}(\theta^*)) - K_{t \mid t+1}(\theta^*) \|_F$ and
\begin{align*}
  &    	 H_t(\sigma^*,K_{t+1}(\theta^*)) \triangleq I-K_{t+1}(\theta^*){B_t(\sigma^*)} \\
  &  \qquad \times \big({B_t(\sigma^*)}\T K_{t+1}(\theta^*) B_t(\sigma^*) +R_t(\sigma^*)\big)^{-1}{B_t(\sigma^*)}\T.
\end{align*}  
 
\end{thm}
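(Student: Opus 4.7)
The plan is to bound the suboptimality gap in \eqref{suboptimal_bound} by passing through the SDP-relaxed trajectory $\{K_{t\mid t+1}(\theta^*)\}$ as a pivot. First, by the identity $\sum_{t=0}^{T}\operatorname{tr}(K_t(\cdot)W_{t-1}) = \sum_{t=0}^{T-1}\operatorname{tr}(K_{t\mid t+1}(\cdot)\bar W_{t-1}) + r$ from \eqref{eq_change} (with $r$ schedule-independent), the quantity to control is $\sum_{t=0}^{T-1}\operatorname{tr}\big((K_{t\mid t+1}(\sigma) - K_{t\mid t+1}(\sigma^*))\bar W_{t-1}\big)$. Applying the Frobenius Cauchy--Schwarz inequality $|\operatorname{tr}(XY)| \le \|X\|_F\|Y\|_F$ followed by the triangle inequality with pivot $K_{t\mid t+1}(\theta^*)$ reduces the task to bounding $d_t := \|K_{t\mid t+1}(\sigma) - K_{t\mid t+1}(\theta^*)\|_F$; the residual pivot term $\|K_{t\mid t+1}(\theta^*) - K_{t\mid t+1}(\sigma^*)\|_F$ is already the second summand inside $\epsilon$.

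Second, I would derive a one-step recursion $d_t \le \lambda_t d_{t+1} + \beta_t$. Because Algorithm~\ref{algm_1} sets $\sigma(t) = \arg\min_i \|K^o_{t\mid t+1} - g_t(i, K_{t+1}(\sigma))\|_F$, the suboptimality of $\sigma^*(t)$ in that inner problem gives $d_t \le \|g_t(\sigma^*(t), K_{t+1}(\sigma)) - K_{t\mid t+1}(\theta^*)\|_F$. Splitting this by the triangle inequality around the point $g_t(\sigma^*(t), K_{t+1}(\theta^*))$ isolates $\beta_t$ as one summand and leaves the quantity $\|g_t(\sigma^*(t), K_{t+1}(\sigma)) - g_t(\sigma^*(t), K_{t+1}(\theta^*))\|_F$ to estimate. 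Using Lemma~\ref{lem_g_concave} together with $\|HXH^\top\|_F \le \|H\|^2\|X\|_F$, this is majorized by $\|H_t(\sigma^*, K_{t+1}(\theta^*))\|^2\|K_{t+1}(\sigma) - K_{t+1}(\theta^*)\|_F$. Finally, since $K_{t+1} = A_{t+1}^\top K_{t+1\mid t+2}A_{t+1} + Q_{t+1}$, submultiplicativity gives $\|K_{t+1}(\sigma) - K_{t+1}(\theta^*)\|_F \le \|A_{t+1}\|^2 d_{t+1}$, which closes the recursion with the base case $d_{T-1} \le \beta_{T-1}$ (since $K_T = Q_T$ is schedule-independent).

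The main obstacle is upgrading Lemma~\ref{lem_g_concave}, which provides only the directional derivative of $g_t(\sigma^*(t),\cdot)$, into a Frobenius Lipschitz estimate whose constant is evaluated at the specific point $K_{t+1}(\theta^*)$ rather than at some interior point of the segment joining $K_{t+1}(\theta^*)$ and $K_{t+1}(\sigma)$. I would handle this by integrating $\tfrac{d}{d\varepsilon}g_t(\sigma^*(t), M_\varepsilon)$ along the L\"owner segment $M_\varepsilon = K_{t+1}(\theta^*) + \varepsilon(K_{t+1}(\sigma) - K_{t+1}(\theta^*))$ and invoking a monotonicity of $\|H_t(i,\cdot)\|$ along the L\"owner order (or, equivalently, a concavity-based one-sided linear majorization applied in both directions) to majorize the path-wise operator norm by its value at $K_{t+1}(\theta^*)$; once this step is justified, the rest is mechanical.

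Once the recursion is established, unrolling gives $d_t \le \sum_{k=t}^{T-1}\big(\prod_{j=t}^{k-1}\lambda_j\big)\beta_k$; replacing each $\lambda_j$ by the uniform surrogate $\lambda$ and interchanging the order of summation yields $\sum_{t=0}^{T-1} d_t \le \sum_{t=0}^{T-1}\tfrac{\lambda^{t+1}-1}{\lambda-1}\beta_t$. Multiplying through by $\|\bar W_{t-1}\|_F$ and adding back the pivot residual $\sum_{t=0}^{T-1}\|K_{t\mid t+1}(\theta^*) - K_{t\mid t+1}(\sigma^*)\|_F$ reproduces exactly the expression for $\epsilon$ in \eqref{eq_epsilon}, delivering the claimed suboptimality bound.
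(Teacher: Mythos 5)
Your proposal follows essentially the same route as the paper's proof: the same reduction via \eqref{eq_change}, the same pivot through $K_{t\mid t+1}(\theta^*)$, the same use of the algorithm's argmin property against $\sigma^*(t)$, the same concavity-based Lipschitz estimate from Lemma~\ref{lem_g_concave}, and the same recursion $\eta_t\le\lambda\eta_{t+1}+\beta_t$ unrolled into the geometric sum defining $\epsilon$. The only difference is that you explicitly spell out the integrate-along-the-segment argument needed to turn the directional derivative into a Lipschitz bound with constant evaluated at $K_{t+1}(\theta^*)$, a step the paper compresses into a single appeal to ``Lemma~4 and the concavity of $g_t(i,\cdot)$.''
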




\begin{proof} 
First, let us recall  \eqref{eq_change} and we then have
		\begin{align*}
		&\textstyle	 \sum_{t=0}^{T } {\rm tr}( K_{t}(\sigma) W_{t-1})  -   \sum_{t=0}^{T } {\rm tr}( K_{t }(\sigma^*) W_{t-1}) \\
		  = & \textstyle \sum_{t=0}^{T-1}{\rm tr }\Big( \big( K_{t|t+1}(\sigma) - K_{t|t+1}(\sigma^*) \big) \bar W_{t-1}\Big)\\
				 \leq & \textstyle \sum_{t=0}^{T -1} \|  K_{t|t+1}(\sigma) - K_{t|t+1}(\sigma^*)\|_F \| \bar W_{t-1}\|_F. 
			\end{align*}
			Next, for all $t$,   it holds that
	\begin{align}
		& \|K_{t \mid t+1}(\sigma) - K_{t \mid t+1}(\sigma^*)\|_F \le   \|K_{t \mid t+1}(\sigma) - K_{t \mid t+1}(\theta^*)\|_F \nonumber\\  
		&\qquad + \|K_{t \mid t+1}(\theta^*) - K_{t \mid t+1}(\sigma^*)\|_F, \label{eq:difference}
		%
	\end{align}
 	where $K_{t \mid t+1}(\sigma)$ is the obtained matrix when schedule $\sigma$ is used from  time   $T-1$ backwards to $t$. 
		Similarly, we define $K_t(\sigma^*)$ and $K_t(\theta^*)$.
	Furthermore, according to the definition of $\theta^*$, we have $K_{t \mid t+1}(\theta^*)=K_{t \mid t+1}^o$.
	Next, note that, due to the design of our algorithm {(line 4  in Algorithm~\ref{algm_1}),}  it holds  
	\begin{align*}
	&	 \| K_{t \mid t+1}(\sigma) - K_{t \mid t+1}(\theta^*) \|_F \\
		= &\min_i \|g_t(i,K_{t+1}(\sigma))- K_{t \mid t+1}(\theta^*)\|_F\\
		\le & \|g_t(\sigma^*(t),K_{t+1}(\sigma))- K_{t \mid t+1}(\theta^*) \|_F\\
		 \le &\|g_t(\sigma^*(t),K_{t+1}(\sigma))- g_t(\sigma^*(t),K_{t+1}(\theta^*)) \|_F  \\
		&\qquad +  \| g_t(\sigma^*(t),K_{t+1}(\theta^*)) - K_{t \mid t+1}(\theta^*) \|_F.
	\end{align*}
	\iflongversion
It then follows from Lemma~4  and  the concavity of $g_t(i,\cdot)$ that
\else
It then follows from \cite[Lemma 4]{jiao2022}  and  { the concavity of $g_t(i,\cdot)$} that
\fi
	\begin{align*}
		&\|g_t(\sigma^*(t),K_{t+1}(\sigma))- g_t(\sigma^*(t),K_{t+1}(\theta^*)) \|_F \\ 
		\le   &  \|{ H_t(\sigma^*(t),K_{t+1}(\theta^*))}\|^2 \| A_{t+1} \|^2 \\
	&\qquad 	\times \|K_{t+1 \mid t+2}(\sigma))-K_{t+1  \mid t+2}(\theta^*)\|_F .
	\end{align*}
	By defining 
$	\eta_t \triangleq 	\| K_{t \mid t+1}(\sigma) - K_{t \mid t+1}(\theta^*)\|_F ,$
$    \lambda_t \triangleq \| A_{t+1} \|^2 \| H_t(\sigma^*(t),K_{t+1}(\theta^*))\|^2,$
$ \beta_t \triangleq   \|  g_t(\sigma^*(t),K_{t+1}(\theta^*)) - K_{t \mid t+1}(\theta^*) \|_F ,$
  we obtain
	\begin{align}\label{eta}
		\eta_t \le {\lambda}\eta_{t+1} + \beta_t, \  t=0,1,\ldots,T-2,\    \eta_{T-1} \leq \beta_{T-1}, 
	\end{align}
	where $\lambda=\max_t\lambda_t$.
{This further gives us that 
	$  \eta_t \le  \sum_{i=1}^{T-t}\lambda^{T-t-i}\beta_{T-i}$, for $t = 0,1,\ldots, T-1$.} 
It then follows from \eqref{eq:difference}, \eqref{eta} and the definition of $\eta_t$ that
	\begin{align*}
	&\textstyle	\sum_{t=0}^{T-1} \|K_{t \mid t+1}(\sigma) - K_{t \mid t+1}(\sigma^*)\|_F \\
	\le &\textstyle \sum_{t=0}^{T-1}\eta_t+ \sum_{t=0}^{T-1} \|K_{t \mid t+1}(\theta^*) - K_{t \mid t+1}(\sigma^*)\|_F \\
		   \le & \textstyle \sum_{t=0}^{T-1}\frac{\lambda^{t+1}-1}{\lambda-1}\beta_{t} + \sum_{t=0}^{T-1} \|K_{t \mid t+1}(\theta^*) - K_{t \mid t+1}(\sigma^*)\|_F.
	\end{align*}
This implies that 
	\begin{align*}\label{bound} 
		\sum_{t=0}^{T-1} {\rm tr}( K_{t \mid t+1}(\sigma) \bar W_{t-1})  -   \sum_{t=0}^{T-1} {\rm tr}(  K_{t \mid t+1}(\sigma^*) \bar W_{t-1})  \le  \epsilon,
	\end{align*}
	where $\epsilon$ is given in \eqref{eq_epsilon}.
	This completes the proof.
\end{proof}




\begin{remark}
Note that  equation \eqref{suboptimal_bound} in  Theorem \ref{thm_main} provides a suboptimality bound on Algorithm~\ref{algm_1}.
According to the definition of $\beta_t$, it can be seen that the value of $\epsilon$ depends on the   mismatch between the schedules $\theta^*$ and $\sigma^*$.  
Clearly, if the solution to Problem~\ref{prob_SDP} is already integer in nature (i.e., $\theta^*_t \in \{0,1\}$) for all $t$, then $\beta_t=0$ for all $t$, and consequently we obtain $\epsilon = 0$. 
%
\end{remark}
%

\section{Discussion on Multiple Actuator Scheduling and Actuation     Costs}\label{sec_discussion}

	In this section, we will provide brief discussions on the cases of   multiple actuator scheduling and   actuation costs.


%
%
%

 \subsection{Multiple Actuator Scheduling}

In Section \ref{section_solution}, we   consider  the actuator scheduling problem for the case that exactly one actuator  is used at each time.
In practice, one may encounter a situation that multiple actuators (e.g., $N_t $ out of $N$) are scheduled at the same time. 
Such a problem can be solved in several ways using our method. 
Here we discuss two of them.

As a first approach, one may construct $N \choose N_t$ {\em virtual actuators}, each of these is a group of $N_t$ actuators. 
Thus, selecting $N_t$ out of $N$ actuators is equivalent to selecting one out of these  $N \choose N_t$ virtual actuators.
However, complexity of such an approach grows factorially.

 A less computationally expensive approach is to use $ \sum_{i=1}^N \theta^i_t=N_t$ in Problem \ref{prob_SDP}, along with a modification in Algorithm \ref{algm_1},
 in which case the $N_t$ actuators give the smallest values of $ \Vert K^o_{t \mid t+1}    -M_t(i) \Vert_F$ are the actuators selected at time $t$. This modification in Problem 4  does not introduce extra computational complexity.
 Computational requirements for Algorithm~1 slightly increases. However, given the simplicity of Algorithm~1, this is practically inconsequential.

\subsection{Actuation Costs}
Our main results in Section~\ref{section_solution} are derived by considering all actuators to have equal actuation costs (i.e., $c_t(i) = c$ for all $t$). 
One possible straightforward way to incorporate the actuation costs is to include the term $\sum_{t=0}^{T-1}\sum_{i=1}^N c_t(i)\theta^i_t$ in the objective function of \eqref{prob_SDP}. 
Notice that the term $\sum_{t=0}^{T-1}\sum_{i=1}^N c_t(i)\theta^i_t$ is linear in the optimization variable $\theta_t$, and hence the convexity of the problem is retained.
In the simulation we adopt this approach to include actuation costs.
%
%
%
%


\section{Simulation} \label{sec_simu}

\begin{figure}[t]
 	\centering
 	\begin{tikzpicture}[scale=0.6]
 		\tikzset{VertexStyle1/.style = {shape = circle,
 				color=black,
 				fill=white!93!black,
 				minimum size=1cm,
 				text = black,
 				inner sep = 1.5pt,
 				outer sep = 1pt,
 				minimum size = 0.1cm},
 		}
 	\node[VertexStyle1,draw](1) at (0,0) {$\bf 1$};
		\node[VertexStyle1,draw](2) at (-3,1.5) {$\bf 2$};
		\node[VertexStyle1,draw](3) at (-3,-1.5) {$\bf 3$};
		\node[VertexStyle1,draw](4) at (2.7,-1.5) {$\bf 4$};
		\node[VertexStyle1,draw](5) at (4.7,0) {$\bf 5$};
		\node[VertexStyle1,draw](6) at (2.7,1.5) {$\bf 6$};
  		\Edge[ style = {-,> = latex',pos = 0.2},color=black, labelstyle={inner sep=0pt}](2)(1);
 		\Edge[ style = {-,> = latex',pos = 0.2},color=black, labelstyle={inner sep=0pt}](1)(3);
 		\Edge[ style = {-,> = latex',pos = 0.2},color=black, labelstyle={inner sep=0pt}](2)(3);
 		\Edge[ style = {-,> = latex',pos = 0.2},color=black, labelstyle={inner sep=0pt}](6)(1);
		\Edge[ style = {-,> = latex',pos = 0.2},color=black, labelstyle={inner sep=0pt}](5)(1);
		\Edge[ style = {-,> = latex',pos = 0.2},color=black, labelstyle={inner sep=0pt}](4)(1);
 	\draw (2) -- (3) node [midway, above, sloped] (text1) {0.1};
		=\draw (1) -- (3) node [midway, below, sloped] (text1) {0.1};
		\draw (2) -- (1) node [midway, above, sloped] (text1) {0.2};
		\draw (1) -- (4) node [midway, below, sloped] (text1) {0.2};
		\draw (1) -- (5) node [midway, above, sloped] (text1) {0.1};
		\draw (1) -- (6) node [midway, above, sloped] (text1) {0.1};
 	\end{tikzpicture}
 	\caption{Network model for simulation example.}
 	\label{fig:sim_network}
 \end{figure}
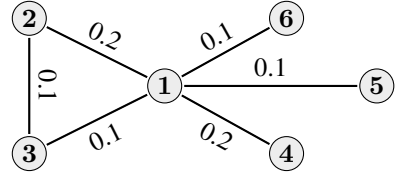


We consider a networked system with $6$ nodes as shown in Fig.~\ref{fig:sim_network}.
The $i$-th node follows the dynamics
\begin{align*}
  \textstyle  x_{t+1}(i) =  \sum_{j = 1}^6 \alpha_{ij} x_t(j) + u_t(i) + w_t(i),
\end{align*}
where $\alpha_{ij} \ge 0$ denotes the weight on the link between nodes $i$ and $j$ and $\alpha_{ii} = 1- \sum_{j = 1, j\ne i}^6 \alpha_{ij}$. 
If there is no link present between node $i$ and $j$, then $\alpha_{ij}= 0$. 
Each node has an actuator associated with it through which one can directly control the state of that node. 
The overall system state $x_t=[x_t(1),\ldots, x_t(6)]\T$ follows the dynamics 
\begin{align*}
\textstyle    x_{t+1} = A x_t + \sum_{i=1}^6 B(i) u_t(i) + w_t,
\end{align*}
where $B(i)\in \R^6$ is $i$-th canonical basis vector in $\R^6$ and $w_t=[w_t(1),\ldots, w_t(6)]\T$.
We consider a cost function of the form \eqref{eq_LQG_cost2} with $Q_t = \frac{1}{2}I, R_t(i) = I$ for all $t\le T-1$ and $Q_T = I$. 
Furthermore, we assume $x_0 \sim \N(0, \frac{1}{2}I)$ and $w_t \sim \N(0, \frac{1}{4}I)$.
The actuation costs are $c_t(i) = 1$ for $i=1,\ldots,4$, $c_t(5) = 1.5$, and $c_t(6) = 2$. 
The costs for $c_t(5)$ and $c_t(6)$ are chosen to be higher because the system is fully controllable only with $B(5)$ and $B(6)$.
For a horizon of $T = 30$, the schedule obtained from our algorithm is shown in Fig.~\ref{fig:our_schedule}, and the corresponding optimal cost is  101.0006. 

\begin{figure}[t]
    \centering
    \includegraphics[trim = 20 0 30 10, clip, height = 0.45 \linewidth]{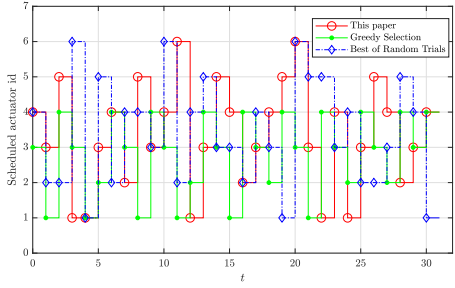}
    \caption{The actuator schedules using different methods.}
    \vspace{- 6 pt}
    \label{fig:our_schedule}
\end{figure}

Interestingly, from the solution to Problem~\ref{prob_SDP} shown in Fig.~\ref{fig:opt_schedule}, we notice that the actuator of the $1$-st node is hardly used since the values of $\theta^1_t$'s are in orders of magnitude smaller than that for the rest of the nodes for all $t$. 
This is in contrast with the schedule we found in Fig.~\ref{fig:our_schedule} where actuator~$1$ is scheduled for several time instances ($\sim 17$\% of the time) by Algorithm~\ref{algm_1}. 
Actuator 2 is used the least by Algorithm~\ref{algm_1} in Fig.~\ref{fig:our_schedule}, however, in Fig.~\ref{fig:opt_schedule} we notice that $\theta^2_t$ is not the least among all $\theta^i_t$'s.
While one might be tempted to only use actuators 3 and 4 since the corresponding $\theta^i_t$ values are the highest ones in Fig.~\ref{fig:opt_schedule}, however, such restriction leads to a cost of 108.5531, which is higher than what our method found.
This indeed validates our statements in Remark \ref{rem_actuator_selection}.

\begin{figure}[t]
    \centering
    \includegraphics[trim = 20 0 30 10, clip, height = 0.45 \linewidth]{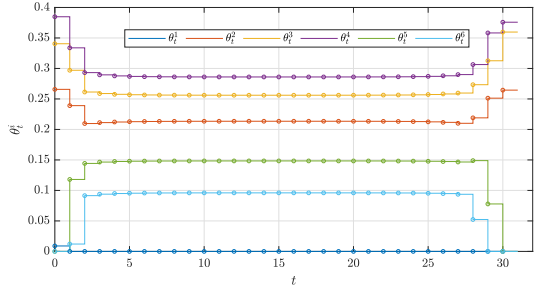}
    \caption{Optimal $\theta^i_t$ from Problem~\ref{prob_SDP}. For all $t$, we obtained $\theta^5_t = \theta^6_t$, and hence they are not distinguishable in the figure.}
    \vspace{- 6 pt}
    \label{fig:opt_schedule}
\end{figure}

Next, in order to compare the performance of our proposed approach
with randomly generated schedules and  a greedy selection approach.\footnote{
Scheduling problems generally has a supermodularity structure which ensures a level of optimality guarantee for the greedy approach. }
We randomly selected 50,000 schedules and computed the cost corresponding to these schedules. 
The resulting cost distribution from the schedules are plotted in Fig.~\ref{fig:histogram} and the minimum cost out of these 50,000 trials is 102.0693.

\begin{figure}[!t]
    \centering
    \includegraphics[ height = 0.42 \linewidth]{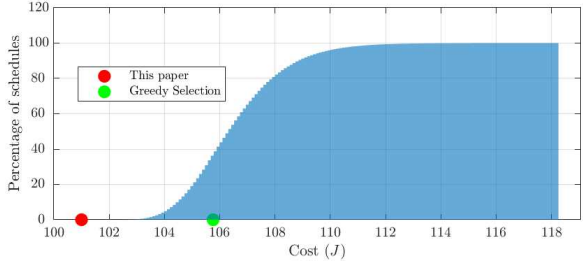}
    \caption{x-axis: Cost ($J$), y-axis: Percentage of the randomly generated trials which produced a cost less than or equal to the value on the x-axis.}
    \vspace{- 12 pt}
    \label{fig:histogram}
\end{figure}
Evaluation of the 50,000 random trials took 34.65 seconds whereas our approach (convex optimization plus trajectory tracking) took 2.5 seconds, which is an order of magnitude less time.
For the greedy approach, at each time instance we greedily selected the actuator that provides the minimum cost for that time stage. This approach is fast ($< 1$ sec) but the performance is the worst (see. Fig.~\ref{fig:histogram}).

\section{Conclusions}\label{sec_conclusion}
In this letter, we have studied the problem of  actuator scheduling  for  stochastic  linear NCSs. 
In particular, we have considered the case that only one actuator is active at each time.
We have proposed a convex relaxation and used its solution as a reference for obtaining a  suboptimal tracking algorithm for solving the actuator scheduling problem.
Suboptimality guarantees for the proposed algorithm have been provided using dynamic programming arguments.
%
We have also discussed the extensions on  the cases with multiple actuator scheduling and  actuation costs.



\balance

\bibliographystyle{IEEEtran}
\bibliography{arXiv.bib}    

\end{document}